\documentclass[12pt]{article}
\usepackage{amsmath}
\usepackage{amscd}
\usepackage{amsthm}
\usepackage{amssymb} \usepackage{latexsym}
\usepackage{eufrak}
\usepackage{euscript}
\usepackage[width=18cm,height=24cm]{geometry}
\usepackage{epsfig}
\usepackage{tikz}
\usepackage{graphics}
\usepackage{array}
\usepackage{enumerate}
\usepackage{authblk}
\theoremstyle{theorem}
\newtheorem{theorem}{Theorem}[section]
\theoremstyle{corollary}
\newtheorem{corollary}{Corollary}[section]
\theoremstyle{lemma}

\theoremstyle{definition}

\theoremstyle{proof}

\theoremstyle{remark}

\newcommand{\bel}[1]{\begin{equation}\label{#1}}

\newcommand{\be}{\begin{equation}}

\newcommand{\ba}{\begin{eqnarray}}
\newcommand{\ea}{\end{eqnarray}}
\newcommand{\rf}[1]{(\ref{#1})}
\newcommand{\bi}{\bibitem}
\newcommand{\qe}{\end{equation}}

\begin{document}
\setcounter{Maxaffil}{2}

\title{Effect on normalized graph Laplacian spectrum by motif attachment and duplication}

\author[1]{\rm Ranjit Mehatari}
\author[1,2]{\rm Anirban Banerjee}
\affil[1]{Department of Mathematics and Statistics}
\affil[2]{Department of Biological Sciences}
\affil[ ]{Indian Institute of Science Education and Research Kolkata}
\affil[ ]{Mohanpur-741246, India}
\affil[ ]{\textit {\{ ranjit1224,anirban.banerjee\}@iiserkol.ac.in}}
\date{}
\maketitle
\begin{abstract}

To some extent, graph evolutionary mechanisms can be explained by its spectra. Here, we are interested in two graph operations, namely, motif (subgraph) doubling and attachment that are biologically relevant. We investigate how these two processes affect the spectrum of the normalized graph Laplacian. A high (algebraic) multiplicity of the eigenvalues $1, 1\pm 0.5, 1\pm \sqrt{0.5}$ and others has been observed in the spectrum of many real networks. We attempt to explain the production of distinct eigenvalues by motif doubling and attachment. Results on the eigenvalue $1$ are discussed separately.
\end{abstract}

AMS classification: 05C75; 47A75\\
Keywords: Normalized graph Laplacian; Graph spectrum; Eigenvalue 1; Motif doubling; Motif attachment.

\section{Introduction} 
Nowadays, spectral graph theory is playing an important role to analyze the structure of real networks \cite{BanerjeeJost2009, BanerjeeJost2007,  GkantsidisMihailZegura2003, Mieghem2011, NadakuditiNewman2012,WS}. 
The underlying graph of biological and other real networks evolves with time. 
The evolutionary mechanisms may lead to the construction of certain local structures (motif)  that could be described by different eigenvalues of normalized graph Laplacian \cite{BanerjeeJosta, Vukadinovic2002}. 
Duplication of a group of genes \cite{HuynenBork1998,Wagner2003}  and horizontal \cite{Dutta02, Jain02} gene transfer may cause the existence of repetitive-motif and attachment of a distinct small network, respectively, in an existing biological network \cite{UriAlon, ShellmanEtAl, Sole, Wagner2003}.
Two graph operations, motif doubling and attachment of a smaller graph into the existing graph, are interest of our study. 
Here we intensively investigate the emergence of particular eigenvalues such as eigenvalue 1 and other  by the above-mentioned graph operations.\\

Let $\Gamma=(V,E)$ be a simple, connected, finite graph of order $n$ with the vertex set $ V$ and the edge set $E$. Two vertices $i,j
\in V(\Gamma)$, are connected by an edge in $E(\Gamma)$, are called neighbors, $i\sim j$. Let $n_i$ be the degree of $i \in V(\Gamma)$, that is, the number of neighbors of $i$.
For the function $g:V(\Gamma)\rightarrow \mathbb{R}$ we define the normalized graph Laplacian as
\bel{int:equ1}
\Delta g(x):=  g(x) -\frac{1}{n_x} \sum_{y, y \sim x}g(y) .
\qe
Note that, this operator is different from the (algebraic) graph Laplacian operator, 
$Lg(x):=n_x g(x)-\sum_{y, y \sim x}g(y)$ (see \cite{Guo,Merris,Pasten} for this operator), but  is similar to the Laplacian,
$\mathcal{L} g(x):=  g(x) -\sum_{y, y \sim x} \frac{1}{\sqrt{n_{x} n_{y}}}g(y)$
 investigated in \cite{Chung} and thus, both have the same spectrum.
 Now we recall some of the basic properties of  eigenvalues and eigenfunctions of the operator (\ref{int:equ1}) from \cite{BanerjeeJost2008a}. The normalized Laplacian is
symmetric for the product,
\bel{int:equ8}
(g_1,g_2):=\sum_{i \in V} n_i g_1(i)g_2(i),
\end{equation}
for real valued functions $g_1,g_2$ on $V(\Gamma)$. 
Since $(\Delta g,g)\ge 0$, all eigenvalues of $\Delta$ are non-negative.
  The eigenvalue equation of $\Delta$ is
  \bel{int:equ2}
\Delta f - \lambda f = 0,
\end{equation}
 where a non zero solution $f$ is called an eigenfunction corresponding to the eigenvalue $\lambda$. 
If we arrange all the eigenvalues in a non-decreasing manner we have,
$\lambda_0=0 < \lambda_1 \le ... \le \lambda_{n-1}\le 2,$
 with 
 $\lambda_{n-1}= 2$ \textit{iff} the graph is bipartite. For a connected graph, the smallest eigenvalue is $\lambda_0=0$ with a constant eigenfunction. Since all the eigenfunctions are orthogonal to each other, for any eigenfunction $f$, we have
 \bel{int:equ9}
\sum_{i \in V} n_{i} f(i)=0.
\end{equation}
 For a graph with $N$ vertices, $\lambda_1\leq\frac{N}{N-1}\leq \lambda_{N-1} $ and the equality hold \textit{iff} the graph is complete, that is, for a complete graph
$\lambda_1=\lambda_2=\cdots=\lambda_{N-1}=\frac{N}{N-1}.$
 Let $m_\lambda$ be the algebraic multiplicity of the eigenvalues $\lambda$.
 The eigenvalue equation \rf{int:equ2} becomes
\bel{int:equ3}
\frac{1}{n_i}\sum_{j \sim i} f(j)=(1-\lambda)f(i) \text{  }\forall i\in\Gamma.
\qe
In particular, if an eigenfunction $f$ vanishes at $i$, then  $\sum_{j\sim
i}f(j) = 0$, and conversely (for eigenvalue 1, the converse is not always true).
For  $\lambda=1$, equation (\ref{int:equ3}) becomes
\bel{int:equ4}
\sum_{j\in \Gamma, j \sim i} f(j)=0 \text{   } \forall i\in \Gamma,
\qe
which is a special property of an eigenfunction for the eigenvalue 1.
%
Note that, the nullity of the adjacency matrix  \cite{dr} $A$ of $\Gamma$ is the same as the algebraic multiplicity of eigenvalue $1$ in  $ \Delta$. That is,
\bel{int:equ7}
m_1=nullity \text{ }of \text{ }A.
\qe
The nullity of the adjacency matrix was very much studied in earlier mathematical Works (see the survey by Gutman and Borovi\'canin \cite {Gutman})
Here, we take a different approach to study the eigenvalue $1$ in the context of normalized graph Laplacian.\\


 
 Now, we extend the discussion on the production  of the eigenvalue 1 
 investigated in  \cite{BanerjeeJost2008a} and  generalize the results to a broad range of  operations. 
\subsubsection*{Vertex Doubling:}
Doubling of a vertex $p$ of $\Gamma$ is to add a vertex $q$ to $\Gamma$ and connect it to all $j$ in $\Gamma$, whenever $j\sim p$. Vertex doubling, of a vertex $p$ of $\Gamma$, ensures the eigenvalue 1 with an eigenfunction  $f_1$ that takes value 1 at $p$, -1 at its double and 0 otherwise \cite{BanerjeeJost2008a}.
Now, if we double the vertex $p$, $m$ times, then the resultant graph possesses the eigenvalue 1 with the  multiplicity at least $m$ with the corresponding eigenfunctions,
\bel{eig1:equ2}
 f^{(i)}_j(x)=\begin{cases} 1 
 & \text{  if } x=p,q_1,q_2,...,q_{j-1}\\
 -j&\text{ if } x=q_j\\
 0 &\text{ else},
\end{cases}
\qe 
for $j$=1,2,...,$i$ and  $i$=1,2,...,$m$; where $q_1$, $q_2$, $q_3$,$\cdots$,$q_m$ are the vertices produced by repeated-doubling of  $p$.
\subsubsection*{Motif Doubling:}
Let $\Sigma$ be a connected induced subgraph of $\Gamma$ with vertices $p_1,\dots ,p_m$. 
Let $\Gamma^\Sigma$ be obtained from $\Gamma$ by adding a copy of the
motif $\Sigma$ consisting of the vertices
$q_1,\dots ,q_m$ and the corresponding connections between them, and connecting each $q_\alpha$ with all $p \notin
\Sigma$ that are neighbors of $p_\alpha$. Now, if $\Sigma$ has an eigenvalue 1 with an eigenfunction $f^{\Sigma}_1$, then  
$\Gamma^\Sigma$ also ensures an eigenvalue $1$ with the eigenfunction 
\bel{eig1:equ3}
f^{\Gamma^\Sigma}_1(p)=\begin{cases} f^\Sigma_1(p_\alpha)
&  \text{ if } p=p_\alpha \in \Sigma \\
-f^\Sigma_1(p_\alpha)
  &\text{ if } p=q_\alpha\\
0 &\text{ else},
\end{cases}
\qe
where $q_\alpha$
is the double of $p_\alpha\in\Sigma$ \cite{BanerjeeJost2008a}.
Now the above operation can easily  be extendable for doubling $\Sigma$ repeatedly $m$ times. Let  
$\Sigma^1, \Sigma^2, \dots, \Sigma^m$  be the doubles of $\Sigma$ and the resultant graph is $\Gamma^{\Sigma^m}$, where  $q_\alpha^{(m)}\in \Sigma^m$ is the double of $p_\alpha$.
\begin{theorem}
\label{eig1:th1}
If $\Sigma$ has an eigenvalue 1 with an eigenfunction $f^{\Sigma}_1$, then  
$\Gamma^{\Sigma^m}$ also ensures an eigenvalue 1 with multiplicity at least  $m$.
\end{theorem}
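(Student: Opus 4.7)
The plan is to exhibit $m$ linearly independent eigenfunctions of $\Gamma^{\Sigma^m}$ for $\lambda=1$, mimicking the construction used in the single-duplication formula \rf{eig1:equ3}. For each $i=1,\dots,m$, I would define a function $f_i$ on $V(\Gamma^{\Sigma^m})$ that is supported only on the original copy of $\Sigma$ together with the $i$-th duplicate, by setting $f_i(p_\alpha)=f^\Sigma_1(p_\alpha)$ on the original $\Sigma$, $f_i(q^{(i)}_\alpha)=-f^\Sigma_1(p_\alpha)$ on the $i$-th duplicate, and $f_i\equiv 0$ elsewhere. (A triangular variant analogous to \rf{eig1:equ2} would also work, but this ``original vs.\ $i$-th copy'' pairing is cleaner.) Since $f^\Sigma_1\not\equiv 0$, each $f_i$ is nonzero.

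To show $f_i$ is an eigenfunction for $\lambda=1$, I would verify the defining relation \rf{int:equ4}, namely $\sum_{y\sim x}f_i(y)=0$, case by case on the location of $x$. At $x=p_\alpha$ in the original $\Sigma$, the sum decomposes into a contribution from neighbors inside $\Sigma$ (which equals $\sum_{p_\beta\sim p_\alpha}f^\Sigma_1(p_\beta)=0$ by applying \rf{int:equ4} to $f^\Sigma_1$ on $\Sigma$) and a contribution from external neighbors $v\notin\Sigma$ (which vanishes since $f_i(v)=0$). At $x=q^{(i)}_\alpha$, the neighborhood structure mirrors that of $p_\alpha$, and the sum is just the negative of the previous one, hence also $0$. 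At $x=q^{(j)}_\alpha$ for $j\ne i$ all neighbors carry the value $0$. The last and only delicate case is an external vertex $x=v\notin\Sigma$: here $v$ is connected to $p_\alpha$ in the original \emph{and}, by construction of the duplications, to every corresponding $q^{(j)}_\alpha$ for $j=1,\dots,m$. Duplicates $j\ne i$ contribute $0$, and the original contribution $\sum_{\alpha:\,v\sim p_\alpha}f^\Sigma_1(p_\alpha)$ is exactly cancelled by the $i$-th duplicate contribution $\sum_{\alpha:\,v\sim p_\alpha}(-f^\Sigma_1(p_\alpha))$.

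Linear independence of $f_1,\dots,f_m$ follows from their disjoint ``extra'' supports: if $\sum_{i=1}^m c_i f_i\equiv 0$, then restricting to the $i$-th duplicate gives $-c_i f^\Sigma_1(p_\alpha)=0$ for all $\alpha$, and since $f^\Sigma_1\not\equiv 0$ this forces $c_i=0$. Consequently the eigenspace for $\lambda=1$ on $\Gamma^{\Sigma^m}$ has dimension at least $m$, which is the claim.

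The step I expect to require the most care is the external-vertex verification (case $x=v\notin\Sigma$): the cancellation hinges on the precise fact that each $q^{(j)}_\alpha$ is joined to exactly the same set of external vertices as $p_\alpha$ (and to none of the other $p_\beta$ or $q^{(k)}_\beta$ outside its own copy of the motif). Once this combinatorial bookkeeping is set down cleanly, the remaining cases are immediate from the eigenvalue-$1$ property of $f^\Sigma_1$ on $\Sigma$ via \rf{int:equ4}.
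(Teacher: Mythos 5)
Your proposal is correct and follows essentially the same route as the paper: exhibiting $m$ explicit, linearly independent eigenfunctions for $\lambda=1$ supported on $\Sigma$ and its duplicates. The paper's proof uses a ``triangular'' family (equal to $f^\Sigma_1$ on $\Sigma$ and on the first $j-1$ duplicates, $-jf^\Sigma_1$ on the $j$-th duplicate, $0$ elsewhere), which spans the same eigenspace as your ``original versus $i$-th copy'' family; your case-by-case check at external vertices and the linear-independence argument simply spell out the bookkeeping the paper leaves implicit.
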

\begin{proof}
For each $j={1,2,...,m}$ we have the eigenfunctions 
\bel{eig1:equ4}
f^{\Gamma^{\Sigma^m}}_j(p)=\begin{cases} f^\Sigma_1(p_\alpha)
 & \text{ if } p=p_\alpha \in \Sigma \\
f^\Sigma_1(p_\alpha)
  &\text{ if } p=q^{(l)}_\alpha,j>1, 1\le l\le {j-1}\\
  -jf^\Sigma_1(p_\alpha)
&  \text{ if } p=q^{(j)}_\alpha\\
0 &\text{ elsewhere,}
\end{cases}
\qe
 corresponding to eigenvalue 1.
\end{proof}
\subsubsection*{Graph Attachment:}\label{eig1:GraphCoupling}
Let $\Gamma_1$ be a graph with an eigenfunction $f^{\Gamma^1}_1$ corresponding to an eigenvalue 1. We take a vertex $q\in\Gamma$ and another vertex $p\in\Gamma_1$ and construct $\Gamma_0=\Gamma\cup\Gamma_1$ in such a way that $\Gamma_0$ contains all the connections of $\Gamma$ and $\Gamma_1$ and in addition, we add edges from $q$ to $j\in \Gamma_1$, whenever $j\sim p$.
\begin{theorem}
\label{eig1:th2}
$\Gamma_0$ possesses the eigenvalue 1 with an eigenfunction $f^{\Gamma_0}_1$.
\end{theorem}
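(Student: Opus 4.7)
The plan is to build the desired eigenfunction by extending $f^{\Gamma^1}_1$ by zero on the $\Gamma$-vertices, namely
\[
f^{\Gamma_0}_1(x)=\begin{cases} f^{\Gamma^1}_1(x) & \text{if } x \in \Gamma_1,\\ 0 & \text{if } x \in \Gamma, \end{cases}
\]
and then verify that this is an eigenfunction of $\Delta_{\Gamma_0}$ for $\lambda=1$. A key simplification is that the $\lambda=1$ eigenvalue equation reduces, via (\ref{int:equ4}), to the degree-free identity $\sum_{j\sim i} f(j)=0$ at every vertex $i$, so there is no need to keep track of how vertex degrees change under the coupling.

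I would split the verification into four cases according to where $i$ sits in $\Gamma_0$. If $i\in\Gamma_1\setminus\{p\}$, the coupling adds no edges incident to $i$, so its neighbourhood is unchanged and the sum vanishes by the hypothesis on $f^{\Gamma^1}_1$. If $i=p$, the only new edges introduced by the coupling run from $q$ to the neighbours of $p$, and \emph{not} to $p$ itself; hence the neighbourhood of $p$ in $\Gamma_0$ still coincides with its $\Gamma_1$-neighbourhood and the sum again vanishes. If $i\in\Gamma\setminus\{q\}$, all neighbours of $i$ lie in $\Gamma$, where $f^{\Gamma_0}_1\equiv 0$, so the sum is trivially zero.

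The only genuine check is at $i=q$: by construction, its neighbourhood in $\Gamma_0$ is the disjoint union of its original $\Gamma$-neighbours and the $\Gamma_1$-neighbours of $p$. Since $f^{\Gamma_0}_1$ vanishes on $\Gamma$, the contribution from the first part is $0$, and the contribution from the second part is $\sum_{j\sim_{\Gamma_1} p} f^{\Gamma^1}_1(j)$, which vanishes by applying (\ref{int:equ4}) to $f^{\Gamma^1}_1$ at the vertex $p$. This case is where the hypothesis on $\Gamma_1$ and the coupling rule (that $q$ inherits the adjacencies of $p$) interact, so it is the crux of the argument; the other three cases are routine local book-keeping and carry no real content. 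Because $f^{\Gamma_0}_1$ agrees with $f^{\Gamma^1}_1$ on $\Gamma_1$ by construction and is not identically zero, the conclusion of the theorem then follows.
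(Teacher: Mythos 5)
Your construction and verification follow the paper's own proof: extend $f^{\Gamma^1}_1$ by zero on $\Gamma$ and check the degree-free condition $\sum_{j\sim i}f(j)=0$ at every vertex, with the only substantive check occurring at $q$. One small slip in your case analysis: for $i\in\Gamma_1\setminus\{p\}$ with $i\sim p$, the coupling \emph{does} add an edge incident to $i$ (namely the edge $iq$), contradicting your claim that its neighbourhood is unchanged; the sum at such $i$ nevertheless still vanishes because the extension takes the value $0$ at $q$, so the conclusion is unaffected. With that correction your argument is complete and, if anything, more explicit than the paper's proof, which only records $e(q)=e(p)=0$ and asserts $e(i)=0$ for all $i$.
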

\begin{proof}
We choose
\bel{eig1:equ5}
f^{\Gamma_0}_1(i)=\begin{cases}f^{\Gamma^1}_1(i)&\text{ if } i\in\Gamma_1\\0&\text{ if }i\in\Gamma.
\end{cases}
\qe
\end{proof}
\paragraph*{Example:}
Let $\Gamma$ be a triangle and $\Gamma_1$ be a chain of three vertices. Unlike $\Gamma$, $\Gamma_1$ has an  eigenvalue 1.
Now, we join any vertex from $\Gamma$ to the both end vertices of $\Gamma_1$ and produce $\Gamma_0$ that possesses eigenvalue 1.
%
Note that, if $m_1=l$ in $\Gamma_1$, $\Gamma_0$ ensures the eigenvalue $1$ with the multiplicity at least $l$.

\section{Eigenvalue $\lambda$} 
In this section, we describe how the graph operations,  motif doubling and attachment of another graph produce a particular eigenvalue $\lambda$. 

\subsubsection*{Motif Doubling :}
Let $\Sigma$ be a motif in $\Gamma$. Suppose a real valued function $f$ satisfies
\bel{eig_lambda:equ1}
\frac{1}{n_i}\sum_{j\in \Sigma, j \sim i} f(j)=(1-\lambda)f(i) \text{
  for all }i\in \Sigma \text{ and some real value } \lambda,
\qe
where $n_i$ is the degree of $i$ in $\Gamma$.
Let $\Gamma^\Sigma$ be the graph obtained from $\Gamma$ by doubling $\Sigma$ in $\Gamma$ \cite{BanerjeeJost2008a}. Then $\Gamma^\Sigma$ ensures an eigenvalue $\lambda$ with the eigenfunction 
\bel{eig_lambda:equ2}
f^{\Gamma^{\Sigma}}_\lambda(p)=\begin{cases} f(p_\alpha)
  &\text{ if } p=p_\alpha \in \Sigma \\
  -f(p_\alpha)
  &\text{ if } p=q_\alpha, \\
0 &\text{ else}.
\end{cases}
\qe

\begin{theorem}
\label{eig_lambda:th1}
$\Sigma$, is a motif in a graph $\Gamma$, 
 consists of the vertices $p_1,p_2,...,p_m$. If $\Gamma^\Sigma$ is obtained by doubling $\Sigma$ in $\Gamma$, then the additional eigenvalues of the graph $\Gamma^\Sigma$ are given by 
$A_\Sigma+(\lambda-1)D_\Sigma = 0$, where $A_\Sigma$ is the adjacency matrix of $\Sigma$ and $D_\Sigma$ is the diagonal matrix with the degrees of $p_1,p_2,...,p_m$.

\end{theorem}
\begin{proof}
From the eigenvalue equation (\ref{eig_lambda:equ1}) we get,
\bel{eig_lambda:equ5}
(\lambda-1)f(p_i) + \sum_{p_j\in \Sigma, p_j \sim p_i} f(p_j)= 0,  \text{ }\forall p_i\in \Sigma,
\qe
which gives a set of  $m$ homogeneous liner equations that can be written as:
\bel{eig_lambda:equ9}
AF=O, \qe
where $A= A_\Sigma+(\lambda-1)D_\Sigma$,  
$F=(f(p_1)\text{ } f(p_2) \text{ }\cdots \text{ }f(p_m))^T$ and $O=(0\text{ }0\text{ }\cdots \text{ }0)^T.$
Now, to get a nonzero solution for $f$, we  have
\bel{eig_lambda:equ10}
det(A)=0.
\qe
Hence the proof.

%
%
\end{proof}
\paragraph*{Example:} Let $\Gamma$ be a graph with more than three vertices and $\Sigma$ be a chain, with the vertices, $p_1-p_2-p_3$. Now,  from the theorem \ref{eig_lambda:th1} we have\\
\[ \left|\begin{array}{ccc}
    n_{p_1}(\lambda-1) & 1 & 0 \\
    1 & n_{p_2}(\lambda-1) & 1 \\
    0 & 1 & n_{p_3}(\lambda-1)
\end{array} \right|=0,\]\\
which gives
$\lambda=1,1\pm\sqrt{\frac{n_{p_1}+n_{p_3}}{n_{p_1}n_{p_2}n_{p_3}}}$.
Note that, if $n_{p_1}=n_{p_2}=n_{p_3}=k$, then  $\lambda=1, 1\pm \frac{\sqrt{2}}{k}$, that is, 
$\lambda\approx1$ when $k$ is very large.

\subsubsection*{Repeated Duplication of a Motif}
The repeated doubling of a motif increases the multiplicity of the eigenvalue  $\lambda$.

\begin{theorem}
\label{eig_lambda:th2}
Let  $\Sigma$, consists of the vertices $p_1$, $p_2$, $\ldots$, $p_m$, be a motif in $\Gamma$. Suppose a nonzero real valued function $f$ satisfies the equation 
\bel{eig_lambda:equ13}
\frac{1}{n_{p_i}}\sum_{p_j\in \Sigma, p_j \sim p_i} f(p_j)=(1-\lambda)f(p_i) \text{,
  for all }p_i\in \Sigma \text{ and some real value } \lambda,
\qe
where $n_{p_i}$ is the degree of the vertex $p_i$ in $\Gamma$.
If $\Gamma^{\Sigma^m}$ is the graph obtained from $\Gamma$ by repeatedly doubling $\Sigma$ $m$ times, as we described in theorem \ref{eig1:th1},
then $\Gamma^{\Sigma^m}$ ensures the eigenvalue $\lambda$ with the multiplicity at least $m$.
\end{theorem}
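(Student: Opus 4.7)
The plan is to mimic the construction from Theorem \ref{eig1:th1}, but with the motif function $f$ of hypothesis (\ref{eig_lambda:equ13}) playing the role that $f^\Sigma_1$ did for $\lambda=1$, while retaining the same sum-to-zero coefficient pattern along each ``fibre'' over $\Sigma$. Concretely, for each $j\in\{1,2,\ldots,m\}$ I would define $f_j\colon V(\Gamma^{\Sigma^m})\to\mathbb{R}$ by
\begin{equation*}
f_j(x)=\begin{cases}
f(p_\alpha)&\text{if }x=p_\alpha\in\Sigma,\\
f(p_\alpha)&\text{if }x=q^{(l)}_\alpha\text{ with }1\le l\le j-1,\\
-j\,f(p_\alpha)&\text{if }x=q^{(j)}_\alpha,\\
0&\text{elsewhere in }\Gamma^{\Sigma^m}.
\end{cases}
\end{equation*}
The scalars multiplying $f(p_\alpha)$ along each fibre $\{p_\alpha,q^{(1)}_\alpha,\ldots,q^{(m)}_\alpha\}$ are $1,1,\ldots,1,-j,0,\ldots,0$, which sum to zero; this cancellation will drive the verification off of $\Sigma$.

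Next I would check the eigenvalue equation (\ref{int:equ3}) at three kinds of vertices. For $x=p_\alpha\in\Sigma$, no duplicate $q^{(l)}_\beta$ is adjacent to $p_\alpha$ and its neighbors outside $\Sigma$ all carry value $0$, so the average collapses to $\tfrac{1}{n_{p_\alpha}}\sum_{p_\beta\sim p_\alpha,\,p_\beta\in\Sigma}f(p_\beta)=(1-\lambda)f(p_\alpha)$ by (\ref{eig_lambda:equ13}). For $x=q^{(l)}_\alpha$, its in-duplicate neighborhood mirrors $p_\alpha$'s neighborhood inside $\Sigma$, its outside neighbors again carry value $0$, and its degree equals $n_{p_\alpha}$, so the same motif equation applies, scaled by $1$, $-j$, or $0$ according as $l<j$, $l=j$, or $l>j$, always matching $(1-\lambda)f_j(x)$. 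For $x=v\in V(\Gamma)\setminus\Sigma$, the vertex $v$ is adjacent to $p_\alpha$ together with every duplicate $q^{(l)}_\alpha$ whenever $v\sim p_\alpha$ in $\Gamma$, so the neighbor sum becomes $\sum_{\alpha:\,p_\alpha\sim v}\bigl(1+(j-1)-j\bigr)f(p_\alpha)=0$, matching $(1-\lambda)\cdot 0$.

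The main obstacle I anticipate is finding the right ansatz: the equations at vertices inside $\Sigma$ and its duplicates are automatically satisfied (via the motif equation) for \emph{any} choice of fibre coefficients, but the equations at vertices $v\notin\Sigma$ impose the sum-to-zero constraint on them, and the key realization is that this still leaves an $m$-dimensional space from which the $f_j$ form a convenient triangular basis. Linear independence then follows by choosing any $\alpha_0$ with $f(p_{\alpha_0})\neq 0$ (which exists since $f\not\equiv 0$) and observing that the $m\times m$ matrix $[f_j(q^{(l)}_{\alpha_0})]_{j,l}$ is lower triangular with nonzero diagonal entries $-j\,f(p_{\alpha_0})$, so the functions $f_1,\ldots,f_m$ are independent and $m_\lambda\ge m$ in $\Gamma^{\Sigma^m}$.
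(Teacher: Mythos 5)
Your proposal is correct and follows essentially the same route as the paper: the functions $f_j$ you define are exactly the eigenfunctions in (\ref{eig_lambda:equ14}). You additionally spell out the verification at the three types of vertices and the triangular-matrix argument for linear independence, details the paper's proof leaves implicit.
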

\begin{proof}
We construct $m$ eigenfunctions:
\bel{eig_lambda:equ14}
f^{\Gamma^{\Sigma^m}}_j(p)=\begin{cases} f(p_\alpha)
  &\text{ if } p=p_\alpha \in \Sigma \\
f(p_\alpha)
  &\text{ if } p=q^{(l)}_\alpha, j>1, 1\le l\le {j-1}\\
  -jf(p_\alpha)
  &\text{ if } p=q^{(j)}_\alpha\\
0 &\text{ else}
\end{cases}
\qe
$j=1,2,..., m$ corresponding to the eigenvalue $\lambda$, in $\Gamma^{\Sigma^m}$.
\end{proof}

\paragraph{Example (star of triangles):} Let $\Gamma^i$ denotes the star with $i$ triangles, that is, $i$ triangles are joined at a single vertex. 
Theorem \ref{eig_lambda:th1} and theorem \ref{eig_lambda:th2} show that the eigenvalues of $\Gamma^i$ are $0$, $0.5$ and $1.5$ with multiplicities $1$, $i-1$ and $i+1$ respectively, whereas the theorem \ref{eig_lambda:th2} provides the corresponding eigenfunctions.

\subsubsection*{Graph Attachment :}
Now, we discuss about the production of an particular eigenvalue $\lambda$  due to graph attachment, that is, we add a graph $\Sigma$ to an existing graph $\Gamma$ (the spectrum of algebraic graph Laplacian produced by a particular graph attachment operation has been discussed in \cite{Pasten}).
 Here, the attachment operation is different than what we described in section (\ref{eig1:GraphCoupling}).
 Let $\Sigma_c$ be an induced subgraph of $\Sigma$. We take a vertex $p\in \Gamma$ and join $p$ to all $j\in \Sigma_c$ by an edge and get the resultant graph $\Gamma^\Sigma$. Now, we have the following theorem associated to this operation.
\begin{theorem}
\label{eig_lambda:th3}
If there exists a nonzero real valued function f which satisfies the equation

\bel{eig_lambda:equ15}
\frac{1}{n_i}\sum_{j\in \Sigma, j \sim i} f(j)=(1-\lambda)f(i) \text{,
  for all }i\in \Sigma \text{ and some real value } \lambda
\qe
and
\bel{eig_lambda:equ16}
\sum_{j\in \Sigma_c}f(j)=0,
\qe
where $n_i$ is the degree of $i$ in $\Gamma^\Sigma$,
then $\Gamma^\Sigma$ possesses the eigenvalue $\lambda$ with an eigenfunction $f^{\Gamma^\Sigma}$ which coincides with f on $\Sigma$.
\end{theorem}
\begin{proof}
We take,
\begin{center}
$f^{\Gamma^\Sigma}(p)=\begin{cases}
f(p) &\text{ if } p\in \Sigma\\
0 &\text{ otherwise.}
\end{cases}$
\end{center}
\end{proof}

\begin{corollary}
\label{eig_lambda:cor1}
Let  $\Sigma$ be a regular graph. We  join a vertex $p\in \Gamma$ to all the vertices in $\Sigma$ and get a new graph $\Gamma^\Sigma$. Now, if there exists a nonzero real valued function f which satisfies the equation $\frac{1}{n_i}\sum_{j\in \Sigma, j \sim i} f(j)=(1-\lambda)f(i) \text{,
  for all }i\in \Sigma \text{ and some real value } \lambda$ (where $n_i$ is the degree of $i$ in $\Gamma^\Sigma$), then $\Gamma^\Sigma$ possesses the eigenvalue $\lambda$ with the eigenfunction $f^{\Gamma^\Sigma}$ which coincides with f on $\Sigma$. 
\end{corollary}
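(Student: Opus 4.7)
The corollary is the case $\Sigma_c = \Sigma$ of Theorem \ref{eig_lambda:th3}, so my plan is simply to reduce to that theorem. Hypothesis (\ref{eig_lambda:equ15}) is already assumed, so the only additional obligation is to verify the balance condition (\ref{eig_lambda:equ16}), which in the present setting reads $\sum_{j \in \Sigma} f(j) = 0$. Once that is in hand, the eigenfunction of $\Gamma^\Sigma$ constructed in the proof of Theorem \ref{eig_lambda:th3} (equal to $f$ on $\Sigma$ and $0$ elsewhere) immediately yields the conclusion.

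To extract the balance condition from the regularity of $\Sigma$, let $k$ denote the common degree of the vertices of $\Sigma$ inside $\Sigma$. Because every vertex of $\Sigma$ is joined to $p$ in $\Gamma^\Sigma$, we have $n_i = k+1$ for each $i \in \Sigma$, so $n_i$ is constant on $\Sigma$. I would multiply (\ref{eig_lambda:equ15}) by $n_i = k+1$ and sum over $i \in \Sigma$. Swapping the order of summation on the left, each $j \in \Sigma$ is counted exactly $k$ times (once per neighbour inside $\Sigma$), and this gives
\[
k \sum_{j \in \Sigma} f(j) \;=\; (k+1)(1-\lambda) \sum_{i \in \Sigma} f(i),
\]
equivalently $\bigl[(k+1)(1-\lambda) - k\bigr] \sum_{j \in \Sigma} f(j) = 0$. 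Whenever the bracket is nonzero this forces $\sum_{j \in \Sigma} f(j) = 0$, and Theorem \ref{eig_lambda:th3} closes the argument.

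The only subtlety lies in the degenerate value $\lambda = \tfrac{1}{k+1}$, where the bracket vanishes. There (\ref{eig_lambda:equ15}) reduces to $\sum_{j \sim i,\, j \in \Sigma} f(j) = k f(i)$, which on a connected $k$-regular $\Sigma$ forces $f$ to be a nonzero constant and therefore violates the balance condition; I would either exclude this value of $\lambda$ from the statement or check separately that the naive extension fails to be an eigenfunction at the vertex $p$. Apart from this caveat, the proof is a single double-counting step combined with Theorem \ref{eig_lambda:th3}.
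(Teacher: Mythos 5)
Your reduction to Theorem \ref{eig_lambda:th3} with $\Sigma_c=\Sigma$ is exactly the paper's strategy, but you verify the balance condition (\ref{eig_lambda:equ16}) differently, and your version is actually the more solid one. The paper disposes of (\ref{eig_lambda:equ16}) in one line by asserting $\langle f^{\Gamma^\Sigma},f_0\rangle=0$, i.e.\ orthogonality to the constant eigenfunction in the degree-weighted product (\ref{int:equ8}); as written this presupposes that $f^{\Gamma^\Sigma}$ is already an eigenfunction for $\lambda\neq 0$, which is the very conclusion being established, so it is at best circular shorthand. Your double count --- multiply (\ref{eig_lambda:equ15}) by the constant degree $n_i=k+1$, sum over $i\in\Sigma$, and use $k$-regularity of $\Sigma$ to get $\bigl[(k+1)(1-\lambda)-k\bigr]\sum_{j\in\Sigma}f(j)=0$ --- is precisely the computation that the orthogonality claim is gesturing at, carried out honestly. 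Moreover, your caveat is not a cosmetic one: at $\lambda=\tfrac{1}{k+1}$ the corollary as stated genuinely fails (take $\Sigma=K_2$ with constant $f\equiv 1$, so $\lambda=\tfrac12$; the zero extension violates the eigenvalue equation at $p$, and no eigenfunction for $\tfrac12$ coinciding with $f$ on $\Sigma$ need exist), an exception the paper's proof silently skips. One small refinement: your claim that the degenerate case forces $f$ constant uses connectedness of $\Sigma$, which the statement does not assume; for disconnected regular $\Sigma$ the eigenvalue-$k$ adjacency eigenspace is spanned by component indicators, and the balance condition may or may not hold, so "exclude $\lambda=\tfrac{1}{k+1}$ (or assume (\ref{eig_lambda:equ16}) directly)" is the cleanest fix.
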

\begin{proof}
Here $\Sigma_c=\Sigma$ and $\langle f^{\Gamma^\Sigma},f_0 \rangle=0$, where $f_0$ is a constant eigenfunction.
\end{proof}

\begin{corollary} 
\label{eig_lambda:cor2}
The above corollary \ref{eig_lambda:cor1} also holds if $\Sigma$ is the complete graph $K_n$. Moreover, 
  here $\lambda=\frac{n+1}{n}$ with $m_\lambda=n-1$.
\end{corollary}
\begin{proof}
Since a complete graph is regular, the first part of the corollary follows from the corollary \ref{eig_lambda:cor1}.
Now, if we consider the graph $\Sigma^\prime=\Sigma\cup\lbrace p\rbrace$ obtained by taking all the edges of $\Sigma$ and the edges $(i,p)$ for all $i\in \Sigma$, then $\Sigma^\prime$ becomes $K_{n+1}$ that has the eigenvalue $\lambda=\frac{n+1}{n}$ with $n-1$ eigenfunctions $f^i_\lambda$, $i=1,2,\ldots,n-1$ which takes the value 0 at $p$, and $\sum_{j\in \Sigma}f^i_\lambda(j)=0$ for $i=1,2,\ldots,n-1$. 
Thus, each $f^i_\lambda$ can be extended to an eigenfunction of $\Gamma^\Sigma$. Hence the proof.
\end{proof}
\paragraph{Example:} If we consider $\Gamma$ and $\Sigma$ both to be $K_2$, then by the above corollary we get $\lambda=1.5$ with an eigenfunction $f_{1.5}=\begin{bmatrix}
0&0&1&-1
\end{bmatrix}^T.$

\paragraph*{Example (Kite $K(m,n)$): }
A \textit{kite} $K(m,n)$ is a graph with $m+n+1$ vertices obtained by joining all the vertices of $K_m$ and $K_n$ to a single vertex $\alpha$. Thus $K(m,n)$ has $m$ vertices, $p_1, \dots, p_m$, with degree $m$, $n$ vertices $q_1, \dots, q_m$,  with degree $n$, and  a vertex $\alpha$ of degree $m+n$. 
Thus, $K(m,n)-\alpha =K_m\cup K_n.$
Now, the corollary \ref{eig_lambda:cor2} shows that $K(m,n)$ has eigenvalues $ \frac{m+1}{m}$ and $\frac{n+1}{n}$ with the  multiplicity $ m-1 $ and $ n-1 $ respectively.
The rest two eigenvalues are $\lambda_\beta$ and $\lambda_\gamma$, such that, 
$\lambda_\beta +\lambda_\gamma =1+\frac{1}{m}+\frac{1}{n}.$
%
%
%
%
Note that, using theorem \ref{eig_lambda:th1} we get $ K(m,m) $ has the eigenvalues $0$, $ \frac{m+1}{m}$ and  $\frac{1}{m}$ with the multiplicity $1$, $2m-1$, and  $1$ respectively.

\begin{corollary} 
\label{eig_lambda:cor3}
If we join all vertices of $\Sigma$ to the vertices $p_1,p_2,\ldots p_k\in \Gamma$, 
then the theorem \ref{eig_lambda:th3} also holds trivially.
\end{corollary}
\begin{proof}
We take the eigenfunction
\begin{center}
$f^{\Gamma^\Sigma}(p)=\begin{cases}
f(p) &\text{ if } p\in \Sigma\\
0 &\text{ otherwise}
\end{cases}$
\end{center}
corresponding to the eigenvalue $\lambda$.
\end{proof}

\begin{corollary}
\label{eig_lambda:cor4}
Now we  combine corollary \ref{eig_lambda:cor2}  and corollary \ref{eig_lambda:cor3}, that is, we take $\Sigma$ to be $K_n$ and we obtain $\Gamma^\Sigma$ by joining all vertices $\Sigma$ to the vertices $p_1,p_2,\ldots,p_k\in \Gamma$. Now, the resultant graph posses the eigenvalue  $\lambda=\frac{n+k}{n+k-1}$  with  $m_\lambda=n-1$. 
\end{corollary}
\begin{proof}
The proof is similar  in  corollary \ref{eig_lambda:cor2}.
Here $\Sigma^\prime=\Sigma\cup\lbrace p_1,p_2,\ldots ,p_k\rbrace$ obtained by taking all the edges of $\Sigma$ and the edges $(i, p_j)$, for all $i\in \Sigma$ and  $j=1,2,\ldots,k$. 
Thus the $\Sigma^\prime$  becomes $K_{n+k}$ which has the eigenvalue $\lambda=\frac{n+k}{n+k-1}$ with $n-1$ eigenfunctions $f^i_\lambda$, $i=1,2,\ldots,n-1$ that take the value 0 at $p_1, p_2, \ldots, p_k$, and $\sum_{j\in \Sigma}f^i_\lambda(j)=0$ for $i=1, 2, \ldots, n-1$.
Since $f^i_\lambda (p_j)=0$ for $j=1,2,\ldots,k$, every $f^i_\lambda$ can be extended to an eigenfunction of $\Gamma^\Sigma$ for $i=1,2\ldots ,n-1$. 
\end{proof}
Note that, if we delete any edge $(p_l, p_j)$ in corollary \ref{eig_lambda:cor4}, $\lambda$ becomes unchanged.
Now, we  generalize the fact in the theorem \ref{eig_lambda:th3}. Let $\Sigma_{c_1},\Sigma_{c_2},\ldots ,\Sigma_{c_k}$ be induced subgraphs (not necessarily having disjoint vertex sets) of $\Sigma$. We take distinct vertices $p_1,p_1,\ldots ,p_k\in \Gamma$ and join all vertices $j\in \Sigma_{c_l}$ by an edge to $p_l$, for all $l=1,2,\ldots ,k$ and  produce $\Gamma^\Sigma$. 

\begin{theorem}
\label{eig_lambda:th4}
If there exists a nonzero real valued function f which satisfies
\bel{eig_lambda:equ17}
\frac{1}{n_i}\sum_{j\in \Sigma, j \sim i} f(j)=(1-\lambda)f(i) \text{
  for all }i\in \Sigma \text{ and some real value } \lambda
\qe
and
\bel{eig_lambda:equ18}
\sum_{j\in \Sigma_{c_l}}f(j)=0\text{ for }l=1,2,\ldots ,k,
\qe
where $n_i$ is the degree of $i$ in $\Gamma^\Sigma$,
then $\Gamma^\Sigma$ possesses the eigenvalue $\lambda$ with an eigenfunction $f^{\Gamma^\Sigma}$ which coincides with f on $\Sigma$.
\end{theorem}
\begin{proof}
We construct the eigenfunction
\begin{center}
$f^{\Gamma^\Sigma}(p)=\begin{cases}
f(p) &\text{ if } p\in \Sigma\\
0 &\text{ otherwise.}
\end{cases}$
\end{center}
\end{proof}

Now, we take both $ \Gamma $ and $ \Sigma $ are regular. Let us consider $ V(\Gamma)=\lbrace p_1,p_2,\ldots p_n\rbrace $, $ V(\Sigma)=\displaystyle
{\bigcup_{i=1}^{n}}  V(\Sigma_{c_i}) $  with
$\vert V(\Sigma_{c_i})\vert=\vert V(\Sigma_{c_j})\vert\text{ }\forall i,j$
 and
$\vert\lbrace \Sigma_{c_i}:x\in \Sigma_{c_i}\rbrace\vert=k \text{ }(constant)\text{ }\forall x\in\Sigma$.
Let $\Gamma^\Sigma$ be the graph obtained by joining all vertices of $ \Sigma_{c_i} $ to $p_i$. Thus $ \lbrace \Gamma,\Sigma\rbrace $ becomes an equitable partition of $\Gamma^\Sigma$. Now, the statement of the theorem \ref{eig_lambda:th4} changes for this construction and follows as:
\begin{corollary}
If $ \Sigma  $ be r-regular, then a nonzero real valued function $f$, which satisfies 
$ \frac{1}{r+k}\sum_{j\in \Sigma, j \sim i} f(j)=(1-\lambda)f(i)$,
  for all $i\in \Sigma$ and some real value  $\lambda$ and
$\sum_{j\in \Sigma_{c_l}}f(j)=0$, for $l=1,2,\ldots ,n,$ can be extended to an eigenfunction corresponding to the eigenvalue $ \lambda $ for the graph $ \Gamma^\Sigma. $
\end{corollary}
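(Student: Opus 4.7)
The plan is to reduce the corollary to a direct application of Theorem \ref{eig_lambda:th4}, where the only new ingredient is a careful bookkeeping of degrees in $\Gamma^\Sigma$ under the regularity/uniformity hypotheses. Following the pattern of the earlier proofs, I will define
\begin{equation*}
f^{\Gamma^\Sigma}(x)=\begin{cases} f(x) & \text{if } x\in \Sigma,\\ 0 & \text{if } x\in \Gamma,\end{cases}
\end{equation*}
and verify the eigenvalue equation (\ref{int:equ3}) at every vertex of $\Gamma^\Sigma$.

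First, I would compute the degree of an arbitrary $i\in V(\Sigma)$ in $\Gamma^\Sigma$. Since $\Sigma$ is $r$-regular, $i$ has $r$ neighbours inside $\Sigma$; by construction, $i$ is also joined to $p_l$ precisely when $i\in V(\Sigma_{c_l})$, and the hypothesis
\begin{equation*}
|\{\Sigma_{c_l}: x\in V(\Sigma_{c_l})\}|=k\quad\forall\, x\in V(\Sigma)
\end{equation*}
forces this to happen exactly $k$ times. Hence the degree of $i$ in $\Gamma^\Sigma$ is $n_i=r+k$, which is why the normalisation factor in the hypothesis is $\tfrac{1}{r+k}$. Because $f^{\Gamma^\Sigma}$ vanishes on every $p_l$, the neighbours of $i$ in $\Gamma^\Sigma$ that contribute to the sum are exactly its neighbours inside $\Sigma$, so the hypothesis
\begin{equation*}
\frac{1}{r+k}\sum_{j\in\Sigma,\, j\sim i}f(j)=(1-\lambda)f(i)
\end{equation*}
is precisely the eigenvalue equation at $i$.

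Next I would check the equation at each $p_l\in V(\Gamma)$. The neighbours of $p_l$ in $\Gamma^\Sigma$ split into its neighbours in $\Gamma$ (on which $f^{\Gamma^\Sigma}$ vanishes) and the vertices of $\Sigma_{c_l}$. Therefore
\begin{equation*}
\sum_{j\sim p_l}f^{\Gamma^\Sigma}(j)=\sum_{j\in V(\Sigma_{c_l})}f(j)=0
\end{equation*}
by the second hypothesis, which matches $(1-\lambda)f^{\Gamma^\Sigma}(p_l)=0$ regardless of $\lambda$. The hypothesis $V(\Sigma)=\bigcup_i V(\Sigma_{c_i})$ guarantees no vertex of $\Sigma$ is left dangling, and the common cardinality of the $V(\Sigma_{c_i})$ is exactly what makes $\{\Gamma,\Sigma\}$ an equitable partition (mentioned just before the corollary), although this observation is not logically needed for the verification itself.

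The main (and essentially only) obstacle is the degree bookkeeping: one has to be careful that the factor $r+k$ appearing in the hypothesis really is the degree of every $i\in V(\Sigma)$ in $\Gamma^\Sigma$, and this relies on both the $r$-regularity of $\Sigma$ and the uniform covering condition that each vertex of $\Sigma$ lies in exactly $k$ of the $\Sigma_{c_i}$. Once these two facts are combined, the corollary follows immediately from Theorem \ref{eig_lambda:th4} applied with this $f$.
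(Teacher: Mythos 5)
Your proposal is correct and follows exactly the route the paper intends: the corollary is the zero-extension argument of Theorem \ref{eig_lambda:th4}, with the only new content being the observation that $r$-regularity of $\Sigma$ together with each vertex lying in exactly $k$ of the $\Sigma_{c_i}$ gives $n_i=r+k$ for every $i\in\Sigma$ in $\Gamma^\Sigma$. Your explicit verification at the vertices $p_l$ and the degree bookkeeping simply spell out what the paper leaves implicit.
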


\section*{Acknowledgements}
The authors gratefully acknowledge the constructive comments of the referee which lead to an improved version of the manuscript. Financial support from Council of Scientific and Industrial Research, India is sincerely acknowledged by Ranjit Mehatari.

\end{document}